\def\ca{{\mathcal{A}}}
\def\css{{\mathcal{S}}}
\def\a{\alpha}
\def\b{\beta}
\def\l{\lambda}
\def\o{\omega}
\def\R{{\mathbb R}}
\def\D{{\mathbb D}}
\def\T{{\mathbb T}}
\def\g{\gamma}
\def\ep{\varepsilon}
\def\ovl{\overline}
\def\dist{{\rm dist}}
\newtheorem{theorem}{Theorem}
\newtheorem{lemma}[theorem]{Lemma}
\newtheorem{corollary}[theorem]{Corollary}
\begin{document}

\title[Critical points]
{On critical points of Blaschke products}
\author[S. Favorov and L. Golinskii]{S. Favorov and L. Golinskii}

\address{Mathematical School, Kharkov National University, 4 Svobody sq.,
Kharkov 61077, Ukraine} \email{Sergey.Ju.Favorov@univer.kharkov.ua}

\address{Mathematics Division, Institute for Low Temperature Physics and
Engineering, 47 Lenin ave., Kharkov 61103, Ukraine}
\email{leonid.golinskii@gmail.com}

\date{\today}

\keywords{} \subjclass{Primary: 30D50; Secondary: 31A05, 47B10}

\begin{abstract}
We obtain an upper bound for the derivative of a Blaschke product,
whose zeros lie in a certain Stolz-type region. We show that
the derivative belongs to the space of analytic functions in the unit disk, introduced recently in \cite{FG}. As an outcome, we obtain a Blaschke-type condition for critical points of such Blaschke products.
\end{abstract}

\maketitle

\section{Introduction}

Given a sequence $\{z_n\}\subset\D$ subject to the Blaschke
condition
\begin{equation}\label{blacon}
\a:=\sum_{n=1}^\infty (1-|z_n|)<\infty,
\end{equation}
let
\begin{equation*}
B(z)=\prod_{n=1}^\infty b_n(z), \qquad b_n(z)=\frac{\ovl
z_n}{|z_n|}\frac{z_n-z}{1-\ovl z_n z}
\end{equation*}
be a Blaschke product with the zero set $Z(B)=\{z_n\}$. With no loss
of generality we will assume that $B(0)\not=0$.

One of the central problems with Blaschke products is that of the
membership of their derivatives in classical function spaces, $B'\in
X$. There is a vast literature on the problem, starting from
investigations of P. Ahern and his collaborators \cite{A79, A83,
AC74, AC76} and D. Protas \cite{P73} in 1970s, up to quite recent
results of the Spanish school \cite{GP06, GPV07, GPV08}, see also
\cite{MS09, P10}. The above mentioned spaces $X$ are primarily the
Hardy spaces $H^p$, the Bergman spaces $A^p$, the Banach envelopes
of the Hardy spaces $B^p$ etc. Recall the definition of the Bergman
spaces $A^p$, $p>0$:
$$ A^p=\{f\in\ca(\D): \ \ \int\limits_\D |f(z)|^p\,dxdy<\infty\},
\quad z=x+iy.
$$

In this paper we will add to the list some new spaces
$X=\ca(E,\rho)$ of analytic functions in the unit disk, introduced
recently in \cite{FG}. Given a closed set $E=\ovl E\subset\T$ and
$\rho>0$, we say that an analytic function $f$ belongs to
$\ca(E,\rho)$ if
\begin{equation*}
|f(z)|\le C_1\exp\left(\frac{C_2}{d^\rho(z,E)}\right)\,, \quad
d(z,E)=\dist(z,E)
\end{equation*}
is the distance from $z\in\D$ to $E$, $C_{1,2}$ are positive
constants.

The simplest and most general result drops out immediately from the
Schwarz--Pick lemma for functions $g$ from the unit ball of
$H^\infty$:
$$ |g'(z)|\le\frac{1-|g(z)|^2}{1-|z|^2}\,, $$
and states, that $g'\in A^p$ for all $0<p<1$. The result is sharp:
there exists a Blaschke product $B$ such that $B'\notin A^1$ (W.
Rudin).

To proceed further, one should impose some additional restrictions
either on absolute values $|z_n|$, stronger than \eqref{blacon}, or
on location, distribution of arguments of zeros $z_n$ etc. For
instance, a typical result of the first type is due to Protas
\cite{P73}:
$$ \sum_{n=1}^\infty (1-|z_n|)^r<\infty, \quad 0<r<\frac12 \ \
\Rightarrow \ \ B'\in H^{1-r}, $$ and $\frac12$ is sharp.

We are more interested in the second direction, related to the
location of zeros. A typical assumption here is that $Z(B)$ belongs
to certain regions inside the unit disk.

Let $t\in\T$, $\g\ge1$. Following \cite{C62, AC74, GPV08} we
introduce regions
\begin{equation}\label{genstol}
R(t,\g,K):=\{\l\in\D: |t-\l|^{\g}\le K(1-|\l|)\}, \quad K\ge1.
\end{equation}
For $\g=1$, $K>1$ this is the standard Stolz angle. When $\g>1$ the
region touches the circle $\T$ at the vertex $t$ with the power
degree of tangency. The following result claims that $B'$ belongs to
$H^p$ or $A^p$ as soon as $Z(B)\subset R(t,1,K)$.

{\bf Theorem A}. Let $Z(B)\subset R(t,1,K)$. Then
\begin{enumerate}
\item $B'\in H^p$, $p<\frac12$, and $\frac12$ is sharp;
\smallskip
\item $B'\in A^p$, $p<\frac32$, and $\frac32$ is sharp.
\end{enumerate}

\noindent The first statement is proved in \cite[Theorem
2.3]{GPV07}, for the second one see \cite{GPV07, GP06}. For related
results in the case $Z(B)\subset R(t,\g,K)$ with $\g>1$, see
\cite[Section 3]{GPV08}.

We study the same problem for more general Stolz--type regions.

A function $\phi$ on the right half-line will be called a {\it model
function}, if it is nonnegative, continuous and increasing, and
\begin{equation}\label{model}
\phi(x)\le Cx, \qquad x\ge0, \quad C=C(\phi)>0.
\end{equation}
We define a {\it Stolz angle associated with a model function
$\phi$} with the vertex at $t\in\T$ as
\begin{equation}\label{expstol}
\css_\phi(t,K)=\css(t,K):=\{\l\in\D: \phi(|t-\l|)\le K(1-|\l|)\},
\quad K>0.
\end{equation}
Since $|t-\l|\le2$ for $t,\l\in\ovl\D$, it is clear that regions
\eqref{genstol} are of the form \eqref{expstol} for an appropriate
$\phi$. Precisely, one can put $\phi(x)=x^\g$ for $0\le x\le 2$, 
and $\phi(x)=2^{\g-1}x$ for $x\ge2$.
Next, given a closed set $E=\ovl E\subset\T$ we define a
{\it Stolz region, associated with a model function $\phi$ and the set
$E$}, as
\begin{equation*}
\css(E,K):=\{\l\in\D: \phi(d(\l,E))\le
K(1-|\l|)\}=\bigcup\limits_{t\in E} \css(t,K).
\end{equation*}

Here is our main result.
\begin{theorem}\label{mainres}
Let $B$ be a Blaschke product such that $Z(B)\subset\css(E,K)$. Then
\begin{equation}\label{bla4}
|B'(z)|\le
2(2C+K)^2
\sum_{n=1}^\infty(1-|z_n|)\,\phi^{-2}\left(\frac{d(z,E)}6\right)\,.
\end{equation}
\end{theorem}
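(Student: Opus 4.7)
\emph{Plan.} I would first make the standard reduction. Using $|b_k(z)|\le 1$ in $\D$ for every $k$, the triangle inequality applied to $B'=\sum_n b_n'\prod_{k\ne n}b_k$, together with the direct computation $b_n'(z)=-(\ovl z_n/|z_n|)(1-|z_n|^2)/(1-\ovl z_n z)^2$, gives
$$|B'(z)|\le\sum_n|b_n'(z)|=\sum_n\frac{1-|z_n|^2}{|1-\ovl z_n z|^2}\le 2\sum_n\frac{1-|z_n|}{|1-\ovl z_n z|^2}.$$
Hence it suffices to prove the pointwise lower bound
$$|1-\ovl z_n z|\ge\frac{\phi(d(z,E)/6)}{2C+K}$$
uniformly in $n$ and $z\in\D$; squaring this and pulling the common factor out of the sum then yields \eqref{bla4}.

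\emph{Case analysis for the lower bound.} I would work with two elementary inequalities: $|1-\ovl z_n z|\ge|z-z_n|$ (from the identity $|1-\ovl z_n z|^2=|z-z_n|^2+(1-|z|^2)(1-|z_n|^2)$) and $|1-\ovl z_n z|\ge 1-|z_n|$ (from $|1-\ovl z_n z|\ge 1-|z||z_n|$). Split on whether $z_n$ is close to or far from $z$ on the scale $d(z,E)$. In the \emph{far case} $|z-z_n|\ge d(z,E)/6$, the first inequality together with the model-function bound \eqref{model} gives
$$|1-\ovl z_n z|\ge\frac{d(z,E)}{6}\ge\frac{\phi(d(z,E)/6)}{C}\ge\frac{\phi(d(z,E)/6)}{2C+K}.$$
In the \emph{near case} $|z-z_n|<d(z,E)/6$, pick a vertex $t_n\in E$ with $z_n\in\css(t_n,K)$; then since $|t_n-z|\ge d(z,E)$ we get
$$|t_n-z_n|\ge|t_n-z|-|z-z_n|\ge d(z,E)-\frac{d(z,E)}{6}=\frac{5d(z,E)}{6},$$
so monotonicity of $\phi$ and the defining inequality $\phi(|t_n-z_n|)\le K(1-|z_n|)$ of $\css(t_n,K)$ yield $1-|z_n|\ge\phi(5d(z,E)/6)/K\ge\phi(d(z,E)/6)/(2C+K)$, and the second inequality above gives the same lower bound for $|1-\ovl z_n z|$.

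There is no serious obstacle; the only delicate point is to choose the case split so that a single common denominator $2C+K$ absorbs both the factor $C$ coming from \eqref{model} in the far case and the factor $K$ coming from the Stolz condition in the near case. Splitting at $d(z,E)/6$ and using $\max(C,K)\le 2C+K$ does this uniformly, and substitution back into the first display produces the constant $2(2C+K)^2$ in \eqref{bla4}.
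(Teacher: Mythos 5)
Your proof is correct, and it takes a genuinely different route from the paper's. The paper first proves a Vinogradov-type estimate (Lemma \ref{mainlem}): for $\l\in\css(t,K)$ and $z\in\ovl\D$ one has $\phi\bigl(\left|t-z|\l|\right|/3\bigr)\le(2C+K)\,|1-\ovl\l z|$, obtained by writing $\left|1-z|\l|\right|\le|1-\ovl\l z|+(1-|\l|)+|1-\l|$ and applying the three-term inequality \eqref{phi}, with \eqref{model} controlling the first two summands and the Stolz condition the third; it then feeds this into $\sum_n(1-|z_n|^2)|1-\ovl z_n z|^{-2}$ after grouping the zeros by their vertices $t_k\in E$ and using $|t-z|\le2\left|t-z|\l|\right|$. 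You instead prove the equivalent pointwise bound $|1-\ovl z_n z|\ge\phi(d(z,E)/6)/(2C+K)$ directly, by a dichotomy on $|z-z_n|$ versus $d(z,E)/6$ combined with the two elementary estimates $|1-\ovl z_n z|\ge|z-z_n|$ and $|1-\ovl z_n z|\ge1-|z_n|$: in the far case \eqref{model} supplies the denominator $C$, and in the near case the triangle inequality keeps $z_n$ at distance at least $5d(z,E)/6$ from its vertex $t_n$, so the Stolz condition supplies the denominator $K$; the common denominator $2C+K$ absorbs both, and the constant $2(2C+K)^2$ comes out identically. Your version is more elementary and transparent --- it dispenses with the auxiliary quantity $\left|t-z|\l|\right|$, the subadditivity step \eqref{phi}, and the bookkeeping of the disjoint decomposition $Z=\bigcup_k Z_k$ --- whereas the paper's route isolates a self-contained lemma in the spirit of Vinogradov that may be reusable elsewhere. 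Incidentally, your case split shows the constant could be sharpened to $2\max(C,K)^2$.
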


For the standard Stolz angle and $E=\{t\}$ we take $\phi(x)=x$, so
$$ |B'(z)|\le \frac{C_3}{|t-z|^2}, $$
and part (1) in Theorem A follows. Similarly, for the region $R(t,\g,K)$, $\g>1$, \eqref{bla4} implies
$$ |B'(z)|\le \frac{C_4}{|t-z|^{2\g}}, $$
and we come to the following result (cf. \cite[Remark 1]{GPV08}).
\begin{corollary}
If $Z(B)\subset R(t,\g,K)$, $\g>1$, then $B'\in H^p$ for all $p<1/2\g$.
\end{corollary}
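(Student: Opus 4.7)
My plan is to reduce the claim, via the pointwise estimate $|B'(z)| \le C_4/|t-z|^{2\g}$ that the text has already extracted from Theorem \ref{mainres}, to a routine boundary integral.

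Step one: deriving the pointwise bound. I would instantiate Theorem \ref{mainres} with $E = \{t\}$ and the model function $\phi(x) = x^\g$ on $[0,2]$ (extended linearly beyond, as suggested in the paper just after \eqref{expstol}). For any $z \in \D$ one has $d(z,E)/6 = |t-z|/6 \le 1/3$, so the power formula applies and $\phi^2(d(z,E)/6) = (|t-z|/6)^{2\g}$. Combined with the finiteness of $\a = \sum(1-|z_n|)$ from \eqref{blacon}, Theorem \ref{mainres} yields
\begin{equation*}
|B'(z)| \le \frac{C_4}{|t-z|^{2\g}}, \qquad z \in \D,
\end{equation*}
with an explicit constant $C_4 = 2(2C+K)^2 \cdot 6^{2\g}\cdot \a$.

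Step two: estimating the $H^p$-integral. It suffices to show that
\begin{equation*}
\sup_{0<r<1} \int_0^{2\pi} |B'(re^{i\theta})|^p \, d\theta < \infty \qquad \text{for every } p < 1/(2\g).
\end{equation*}
Substituting the pointwise bound from Step one reduces this to controlling $\int_0^{2\pi} |t-re^{i\theta}|^{-2\g p}\,d\theta$ uniformly in $r$. Taking $t = 1$ without loss of generality, the identity $|1-re^{i\theta}|^2 = (1-r)^2 + 4r \sin^2(\theta/2)$ gives $|1-re^{i\theta}| \ge c|\theta|$ for $r \in [1/2, 1)$ and $|\theta| \le \pi$, so the integral is majorized uniformly in $r$ by $\int_{-\pi}^\pi |\theta|^{-2\g p}\,d\theta$. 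This last integral converges precisely when $2\g p < 1$, i.e.\ $p < 1/(2\g)$, giving the sharp exponent claimed. For $r \le 1/2$ the integrand is trivially bounded.

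There is no real obstacle once Theorem \ref{mainres} is granted; the corollary is essentially a reformulation of the main theorem in the classical $H^p$ language, and the only technical point is the routine uniform-in-$r$ estimate of the Poisson-type integral.
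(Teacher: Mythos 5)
Your proposal is correct and follows exactly the route the paper intends: instantiate Theorem \ref{mainres} with $E=\{t\}$ and $\phi(x)=x^\g$ to get $|B'(z)|\le C_4|t-z|^{-2\g}$, then integrate over circles. The paper only records the pointwise bound and leaves the uniform-in-$r$ estimate of $\int_0^{2\pi}|t-re^{i\theta}|^{-2\g p}\,d\theta$ implicit; your Step two supplies that routine detail correctly.
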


\medskip

We are particularly interested in the model function
$\phi(x)=\exp\{-x^{-\rho}\}$, $\rho>0$. In this case \eqref{bla4}
says that $B'\in\ca(E,\rho)$.

Denote $Z(B')=\{z_n'\}$ the zero set of $B'$. Each result of the
form $B'\in X$ provides some information about the critical points
of $B$ (zeros of $B'$), as long as the information about zero sets
of functions from $X$ is available. The most general condition
applied to an arbitrary Blaschke product arises from the fact that
$B'\in A^p$, $p<1$, so (cf. \cite[Theorem 4.7]{HKZ})
$$ \sum_{n=1}^\infty
\frac{1-|z_n'|}{\left(\log\frac1{1-|z_n'|}\right)^{1+\ep}}<\infty,
\qquad \forall\ep>0. $$
 On the other hand there are Blaschke products $B$ such that $\sum
1-|z_n'|=\infty$ (see, e.g., \cite{Pom}).

A Blaschke--type condition for zeros of functions from $\ca(E,\rho)$
is given in a recent paper \cite{FG}. To present its main result we
define, following P. Ahern and D. Clark \cite[p.113]{AC76}, the type
$\b(E)$ of a closed subset $E$ of the unit circle as
$$ \b(E):=\sup\{\b\in\R:\ |E_x|=O(x^\b), \ x\to 0\}, $$
where
$$ E_x:=\{t\in\T:\ d(t,E)<x\}, \quad x>0, $$
is an $x$-neighborhood of $E$, $|E_x|$ its normalized Lebesgue
measure. For the equivalent definition and properties of the type
see also \cite{FG, FG10}.

\begin{theorem}\label{mainres1}
Given a closed set $E\subset\T$ and a Blaschke product $B$, assume
that $Z(B)\subset\css_\phi(E,K)$, $\phi(x)=\exp\{-x^{-\rho}\}$,
$\rho>0$. Then
\begin{equation*}
\sum_{n=1}^\infty (1-|z_n'|)d^{(\rho-\b(E)+\ep)_+}(z_n',E)<\infty,
\qquad \forall\ep>0,
\end{equation*}
$\b(E)$ is the type of $E$, $(a)_+=\max(a,0)$.
\end{theorem}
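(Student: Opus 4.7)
The argument proceeds in two reductions. First, I would apply Theorem \ref{mainres} with the specific model function $\phi(x)=\exp\{-x^{-\rho}\}$. One checks that this $\phi$ satisfies \eqref{model}: it is nonnegative, continuous, strictly increasing on $(0,\infty)$, bounded above by $1$, and hence majorized by $Cx$ for $C$ sufficiently large. With this choice, $\phi^{-2}(y)=\exp\{2y^{-\rho}\}$, so the right-hand side of \eqref{bla4} becomes
\begin{equation*}
2(2C+K)^2\,\a\,\exp\!\left(\frac{2\cdot 6^\rho}{d(z,E)^\rho}\right)=C_1\exp\!\left(\frac{C_2}{d^\rho(z,E)}\right),
\end{equation*}
where $\a$ is the Blaschke sum \eqref{blacon}. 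This is exactly the defining inequality for $B'\in\ca(E,\rho)$, as already remarked in the excerpt.

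Second, I would invoke the main result of \cite{FG}, which provides a Blaschke-type condition for zeros of functions in $\ca(E,\rho)$: if $f\in\ca(E,\rho)$ with zero set $\{w_n\}$ (counted with multiplicity), and $\b(E)$ denotes the type of $E\subset\T$, then
\begin{equation*}
\sum_{n=1}^\infty (1-|w_n|)\,d(w_n,E)^{(\rho-\b(E)+\ep)_+}<\infty\quad\text{for every }\ep>0.
\end{equation*}
Applied to $f=B'$ with $\{w_n\}=Z(B')=\{z'_n\}$, this yields the claimed inequality at once.

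The main obstacle is hidden in the two ingredients rather than in the assembly above: the explicit bound \eqref{bla4} from Theorem \ref{mainres}, whose derivation from the Stolz-region hypothesis is the technical heart of this paper, and the zero-counting theorem from \cite{FG}, whose proof rests on a Jensen-type formula adapted to the set $E$ and its type $\b(E)$. Once both results are granted, the proof of Theorem \ref{mainres1} reduces to the verification that $\phi(x)=\exp\{-x^{-\rho}\}$ satisfies the model-function axioms and to the bookkeeping of constants showing that the corresponding instance of \eqref{bla4} is precisely the growth condition $B'\in\ca(E,\rho)$.
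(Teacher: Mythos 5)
Your proposal is correct and follows exactly the paper's route: the authors likewise prove Theorem \ref{mainres1} by combining Theorem \ref{mainres} (with $\phi(x)=\exp\{-x^{-\rho}\}$, which yields $B'\in\ca(E,\rho)$) with the Blaschke-type condition for zero sets of functions in $\ca(E,\rho)$ from \cite[Theorem 3]{FG}. One small caveat: boundedness of $\phi$ by $1$ alone does not give \eqref{model} near $x=0$; you should instead note that $\phi(x)/x=x^{-1}\exp\{-x^{-\rho}\}\to 0$ as $x\to 0^{+}$, so the ratio is bounded on all of $(0,\infty)$.
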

It is clear that $\b(E)=1$ for each finite set $E$.
\begin{corollary}
Let $Z(B)\subset\css_\phi(t,K)$ with the same $\phi$. Then
\begin{equation*}
\sum_{n=1}^\infty (1-|z_n'|)|t-z_n'|^{(\rho-1+\ep)_+}<\infty, \qquad
\forall\ep>0.
\end{equation*}
\end{corollary}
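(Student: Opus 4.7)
The plan is to derive this corollary as a direct specialization of Theorem \ref{mainres1} to the singleton set $E=\{t\}$. Since $\css_\phi(\{t\},K)$ coincides with $\css_\phi(t,K)$ by the very definition of $\css(E,K)$ as the union $\bigcup_{s\in E}\css(s,K)$, the hypothesis of the corollary exactly matches the hypothesis of Theorem \ref{mainres1} with this choice of $E$. Moreover $d(z_n',E)=|t-z_n'|$ when $E=\{t\}$, so the quantity appearing in the conclusion of Theorem \ref{mainres1} transforms into the one in the corollary.

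The only remaining point is to identify the type $\b(\{t\})$. I would verify directly from the definition that $\b(\{t\})=1$: for a single point, $E_x=\{s\in\T:|s-t|<x\}$ is an arc of normalized Lebesgue measure comparable to $x$ for small $x$, so $|E_x|=O(x)$ while no better exponent is possible. This matches the assertion made in the paragraph preceding the corollary that $\b(E)=1$ for every finite $E$.

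Substituting $\b(E)=1$ and $d(z_n',E)=|t-z_n'|$ into Theorem \ref{mainres1} yields
\begin{equation*}
\sum_{n=1}^\infty (1-|z_n'|)\,|t-z_n'|^{(\rho-1+\ep)_+}<\infty, \qquad \forall\ep>0,
\end{equation*}
which is exactly the conclusion of the corollary. There is no genuine obstacle here beyond pinning down $\b(\{t\})=1$; the rest is bookkeeping about the definitions of $\css$ and $d(\cdot,E)$ in the singleton case.
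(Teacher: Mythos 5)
Your proposal is correct and follows exactly the route the paper intends: the corollary is the specialization of Theorem \ref{mainres1} to $E=\{t\}$, using $d(z_n',\{t\})=|t-z_n'|$ and the fact (stated just before the corollary) that $\b(E)=1$ for finite $E$. Your direct verification that $\b(\{t\})=1$ via the arc $E_x$ is the only detail the paper leaves implicit, and it is carried out correctly.
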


The authors thank the referee for a number of comments that improved the paper write-up.

\section{Main results}

A model function $\phi$ is nonnegative
and increasing, so for all $x,y,u\ge 0$
\begin{equation}\label{phi}
\phi\left(\frac{x+y+u}3\right)\le \phi(x)+\phi(y)+\phi(u).
\end{equation}

We begin with the following result, which is similar to Vinogradov's
lemma from \cite{V94}.
\begin{lemma}\label{mainlem}
Let $z\in\ovl\D$, $t\in\T$ and $\l\in \css(t,K)$. Then
\begin{equation}\label{mainbound}
\frac1{|1-\ovl\l z|}\, \phi\left(\frac{\left
|t-z|\l|\right|}3\right)\le 2C+K.
\end{equation}\end{lemma}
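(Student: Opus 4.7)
The plan is to reduce the desired inequality \eqref{mainbound} to an additive estimate of the form $|t-z|\l|| \le |t-\l| + (1-|\l|) + |1-\ovl\l z|$, and then invoke property \eqref{phi} of the model function $\phi$. The factor $3$ inside $\phi$ on the left-hand side of \eqref{mainbound} already signals that a three-term splitting will be used.

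The geometric heart of the argument is the introduction of the boundary point $t_\l := \l/|\l|\in\T$ (the case $\l=0$ is trivial). Factoring $t_\l$ out of $t_\l - z|\l|$ and using $|\l|\,\ovl{t_\l} = \ovl\l$ yields the identity
$$|t_\l - z|\l|| = |1-\ovl\l z|,$$
which ties the triangle-inequality remainder directly to the denominator of \eqref{mainbound}. Two applications of the triangle inequality, through the intermediate points $t_\l$ and then $\l$, combined with $|\l - t_\l| = 1-|\l|$, give
$$|t-z|\l|| \le |t-t_\l| + |t_\l - z|\l|| \le |t-\l| + (1-|\l|) + |1-\ovl\l z|.$$

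Applying \eqref{phi} with $x=|t-\l|$, $y=1-|\l|$, $u=|1-\ovl\l z|$ and using monotonicity of $\phi$, I would obtain
$$\phi\lp\frac{|t-z|\l||}{3}\rp \le \phi(|t-\l|) + \phi(1-|\l|) + \phi(|1-\ovl\l z|).$$
Each summand is then estimated individually: the first by $K(1-|\l|)$ from the hypothesis $\l\in\css(t,K)$, and the other two by $C(1-|\l|)$ and $C|1-\ovl\l z|$ respectively, via the linear growth condition \eqref{model}. The elementary bound $1-|\l| \le |1-\ovl\l z|$ (valid for $z\in\ovl\D$ since $|\ovl\l z|\le|\l|$) absorbs the first two contributions into $(K+C)|1-\ovl\l z|$; adding the third produces $(2C+K)|1-\ovl\l z|$, and dividing by $|1-\ovl\l z|$ delivers \eqref{mainbound}.

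The only step that is not mechanical is recognizing the correct decomposition, in particular spotting that the auxiliary point $t_\l$ furnishes the identity $|t_\l - z|\l|| = |1-\ovl\l z|$, so that the triangle-inequality remainder coincides exactly with the quantity in the denominator of the conclusion. Once this is in place, the lemma reduces to routine bookkeeping with \eqref{phi}, \eqref{model}, and the definition of $\css(t,K)$.
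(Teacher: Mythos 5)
Your proof is correct and follows essentially the same route as the paper: both arrive at the three-term bound $|t-z|\l||\le |1-\ovl\l z|+(1-|\l|)+|t-\l|$ (the paper via the algebraic identity $1-z|\l|=1-\ovl\l z+z(\ovl\l-|\l|)$ after normalizing $t=1$, you via the auxiliary point $t_\l=\l/|\l|$), and then apply \eqref{phi}, \eqref{model}, the Stolz condition, and $1-|\l|\le|1-\ovl\l z|$ in the same way. The difference is purely cosmetic.
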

\begin{proof}
With no loss of generality we assume that $t=1$. Since
\begin{equation*}
\begin{split}
\left|1-z|\l|\right|&=\left|1-\ovl\l z+z(\ovl\l-|\l|)\right|\le
|1-\ovl\l z|+|\ovl\l-|\l|| \\ &\le |1-\ovl\l z|+(1-|\l|)+|1-\l|,
\end{split}
\end{equation*}
then by \eqref{phi}
\begin{equation*}
\phi\left(\frac{\left |1-z|\l|\right|}3\right)\le
\phi(\left|1-\ovl\l z \right|)+\phi(1-|\l|)+\phi(|1-\l|),
\end{equation*} so
$$ \frac1{|1-\ovl\l z|}\,
\phi\left(\frac{\left |1-z|\l|\right|}3\right)\le A_1+A_2+A_3. $$
By \eqref{model}, for the first two terms we have
$$ A_1=\frac{\phi(\left
|1-\ovl\l z\right|)}{|1-\ovl\l z|}\le C, \quad
A_2\le \frac{\phi(1-|\l|)}{1-|\l|}\le C. $$
As for the third one,
\begin{equation*}
A_3
\le\frac{\phi(|1-\l|)}{1-|\l|}\le K
\end{equation*}
according to the assumption $\l\in \css(1,K)$. The proof is complete.
\end{proof}

Our main result gives a bound for the derivative $B'$ in the case
when the zero set $Z=Z(B)\subset\css(E,K)$.

\noindent {\it Proof of Theorem \ref{mainres}}.
Denote $Z(t):=Z\bigcap \css(t,K)$, $t\in E$. Then there is an
at most countable set $\{t_k\}_{k=1}^\o$, $\o\le\infty$, $t_k\in E$,
so that $z_k\in Z(t_k)$, and $Z=\bigcup_k Z(t_k)$. It is clear that
there is a disjoint decomposition
$$ Z=\bigcup_k Z_k, \quad Z_k\not=\emptyset, \qquad Z_k\subset Z(t_k),
\qquad Z_j\bigcap Z_k=\emptyset, \quad j\not=k. $$ Let us label the
set $Z$ in such a way that
$$ Z=\{z_{kj}\}, \quad k=1,2,\ldots,\o, \quad j=1,2,\ldots\o_k, \quad
\{z_{kj}\}_{j=1}^{\o_k}\subset Z_k. $$

We proceed with the expression
$$ B'(z)=\sum_{j=1}^\infty b_n'(z)B_n(z), \qquad
B_n(z)=\frac{B(z)}{b_n(z)}\,, $$ so
\begin{equation*}
|B'(z)|\le \sum_{n=1}^\infty \frac{1-|z_n|^2}{|1-\ovl z_n
z|^2}\,|B_n(z)|\le \sum_{n=1}^\infty \frac{1-|z_n|^2}{|1-\ovl z_n
z|^2}=\sum_{k=1}^\o\,\sum_{j=1}^{\o_k}\frac{1-|z_{kj}|^2}{|1-\ovl z_{kj}
z|^2}\,.
\end{equation*}
Note that $|t-z|\le 2\left|t-z|\l|\right|$ for all $z,\l\in\D$ and
$t\in\T$. Indeed,
$$ \left|t-z|\l|\right|\ge 1-|z\l|\ge 1-|\l| $$
and
$$ |t-z|\le \left|t-z|\l|\right|+|z|(1-|\l|)\le 2\left|t-z|\l|\right|, $$
as claimed. Hence
$$
\phi\left(\frac{|t-z|}6\right)\le
\phi\left(\frac{|t-z|\l||}3\right)\,,
$$ and by \eqref{mainbound}
\begin{equation*}
\begin{split}
\phi^2\left(\frac{d(z,E)}6\right)\,|B'(z)| &\le
\sum_{k=1}^\o\,\sum_{j=1}^{\o_k}\frac{1-|z_{kj}|^2}{|1-\ovl z_{kj}
z|^2}\,\phi^2\left(\frac{|t_k-z|}6\right)
\\
&\le \sum_{k=1}^\o\,\sum_{j=1}^{\o_k}\frac{1-|z_{kj}|^2}{|1-\ovl
z_{kj} z|^2}\,\phi^2\left(\frac{|t_k-z|z_{kj}||}3\right)
\\ &\le 2(2C+K)^2\sum_{n=1}^\infty (1-|z_n|),
\end{split}\end{equation*}
which is \eqref{bla4}. The proof is complete.

Theorem \ref{mainres1} is a direct consequence of Theorem
\ref{mainres} and \cite[Theorem 3]{FG}.

\end{document}